\documentclass[12pt]{article}
\usepackage[top=1in,bottom=1in,left=1in,right=1in]{geometry}
\usepackage{indentfirst}
\usepackage{amsfonts,amsmath,amsthm,amssymb}
\usepackage{cite}
\usepackage[hidelinks]{hyperref}

\newtheorem{theorem}{Theorem}[section]
\newtheorem{lemma}{Lemma}[section]

\theoremstyle{remark}
\newtheorem{remark}{Remark}[section]

\newcommand{\Z}{\mathbb Z}

\newcommand{\indd}{\operatorname{ind}}

\newcommand{\Um}{U_m}

\begin{document}
\title{A Multiplicative Fourier Proof of the Length-Four Index Conjecture}
\author{
Hongjian Li$^{1,}$\footnote{E-mail\,$:$ lhj@gdufs.edu.cn. Supported by the Project of Guangdong University of Foreign Studies (Grant No. 2024RC063).}\quad
Pingzhi Yuan$^{2,}$\footnote{E-mail\,$:$ yuanpz@scnu.edu.cn. Supported by the National Natural Science Foundation of China (Grant No. 12571003) and the Basic and Applied Basic Research Foundation of Guangdong Province (Grant No. 2024A1515010589).}\quad
Shijie Yuan$^{2,}$\footnote{ E-mail\,$:$ yuanshijie202505@163.com.}\quad 
Weilin Zhang$^{3,}$\footnote{Corresponding author. E-mail\,$:$ weilin@gzhu.edu.cn.}\\
{\small\it $^{1}$School of Mathematics and Statistics, Guangdong University of Foreign Studies,}\\
{\small\it Guangzhou 510006, Guangdong, P. R. China}\\
{\small\it $^{2}$School of Mathematical Sciences, South China Normal University,}\\
{\small\it Guangzhou 510631, Guangdong, P. R. China}\\
{\small\it $^{3}$School of Mathematics and Information Science, Guangzhou University,}\\
{\small\it Guangzhou 510006, Guangdong, P. R. China}
}

\date{}
 \maketitle

\noindent{\bf Abstract}\quad 
Let $C_n$ be a cyclic group of order $n$. We prove that if $(n,6)=1$, then every minimal zero-sum sequence of length four over $C_n$ has index one, thereby resolving the length-four index conjecture. After the gcd reduction, the nonunit case follows from the theorem of Shen--Xia--Li, and the remaining unit case is solved by a new multiplicative Fourier argument. The index-two residue identity yields a character-moment relation, and the odd characters with vanishing first moment form an exceptional spectrum of size at most $157\varphi(n)/1440<\varphi(n)/9$. A finite-group uncertainty
principle then forces the four-term multiset to be invariant under negation, contradicting minimality. Apart from standard facts about primitive Dirichlet $L$-functions, the remaining argument is finite and requires neither asymptotic estimates nor computational verification.

\medskip \noindent{\bf Keywords} Zero-sum theory, index conjecture, multiplicative Fourier analysis, Dirichlet characters
\medskip

\noindent{\bf 2020 Mathematics Subject Classification} Primary 11B50; Secondary 20K01.


\section{Introduction}

Zero-sum theory constitutes a central subfield of additive combinatorics and algebraic number theory, with close connections to Davenport-type invariants, non-unique factorization theory in Krull
monoids, Dedekind sums, discrepancy theory, and Heegaard Floer homology~\cite{CFS99,Gero87,Gero09,GaGe09,Ge16,Ge21,JRW13}. For broader background on structural additive theory and zero-sum problems over finite abelian groups, we refer to \cite{Gry12}. The study of zero-sum phenomena over finite abelian groups has led to a range of structural and extremal questions, among which the index of short zero-sum sequences over cyclic groups plays a prominent role.

Let $C_n\cong \Z/n\Z$ be written additively. A sequence
\[
S=(a_1)\cdots(a_k)
\]
over $C_n$ is called a \emph{zero-sum sequence} if $a_1+\cdots+a_k=0$ in $C_n$, and it is \emph{minimal zero-sum} if no nonempty proper subsequence is zero-sum. If $(x)_n\in\{0,1,\dots,n-1\}$ denotes the least nonnegative residue of $x$ modulo $n$, the index of $S$ is
\[
\indd(S)=\min_{g\in(\Z/n\Z)^\times}\frac{1}{n}\sum_{j=1}^k(ga_j)_n.
\]
For a zero-sum sequence with nonzero terms, the quantity inside the minimum is a positive integer.

The length-four index conjecture, formulated by Ponomarenko~\cite{Po04}, asserts that
\begin{equation}\label{eq:index-conj}
(n,6)=1,\qquad |S|=4,\qquad S\text{ minimal zero-sum}
\quad\Longrightarrow\quad
\indd(S)=1.
\end{equation}

The index problem belongs to the broader additive theory of zero-sum and modular-sum phenomena in cyclic groups~\cite{LK89,Gao00}. One may more generally ask for which pairs $(k,n)$ every length-$k$ minimal zero-sum sequence over
$\mathbb Z/n\mathbb Z$ has index $1$; such pairs are termed \emph{good pairs} \cite{Ge21}. The classification of good pairs is known outside the length-four case. For $k\leq3$, the corresponding pairs are good \cite{Po04}. For $5\leq k\leq\lfloor n/2\rfloor+1$, every pair $(k,n)$ is known to be bad~\cite{Ge21}. For $k>\lfloor n/2\rfloor+1$, independent works of Savchev--Chen and Yuan show that every pair is good~\cite{SC07,Yuan07}; see also~\cite{Gao00}. Thus the case $k=4$ is the remaining central case in this classification and leads precisely to \eqref{eq:index-conj}.

Over the subsequent two decades, the conjecture was attacked by algebraic, combinatorial, computational, and analytic methods, yielding a hierarchy of partial results. An important early breakthrough was the prime-power case: Li, Plyley, Yuan, and Zeng proved in 2010 that the conjecture holds whenever $n=p^\alpha$ for a prime $p$ with $(p,6)=1$ \cite{LPYZ10}. Their argument reduces the problem to structural control of the residues of sequence elements modulo prime powers, using a detailed analysis of parameter ranges and fractional ceiling estimates to locate a unit multiplier $m\in(\mathbb Z/n\mathbb Z)^\times$ realizing index $1$. A series of subsequent works established the conjecture under
several additional structural hypotheses~\cite{Xia13,LP13,SX14}. The full case in which the group order has two distinct prime divisors was proved by Xia and Shen~\cite{XS13}; see also~\cite{SXL14} for a shorter proof and related refinements.

A further advance was obtained by Zeng and Qi~\cite{ZQ17}. Their contribution treats the case in which all terms generate the ambient cyclic group, while the complementary case had already been settled in earlier work. Together these results imply the conjecture under the stronger hypothesis $(n,30)=1$. Consequently, among moduli satisfying $(n,6)=1$, this settles the case $5\nmid n$ and isolates $5\mid n$ as the principal remaining difficulty, a point also emphasized in later work~\cite{GV20,Ge21}.

Further structural progress was obtained by Ge, who proved the conjecture for the class of singular sequences~\cite{Ge18}. Most earlier approaches were combinatorial or arithmetic. A different method was introduced by Ge in 2021, who brought Fourier analysis and discrepancy estimates into the study of the index conjecture~\cite{Ge21}. Ge's geometric reformulation turns the index condition into a lattice-point covering problem: one seeks a unit $g\in(\mathbb Z/n\mathbb Z)^\times$ for which three scaled residues $(ga_1)_n/n,(ga_2)_n/n,(ga_3)_n/n$ lie in the half-unit interval $I=(0,1/2)$. A smoothed periodic indicator with compact Fourier support is used to approximate the characteristic function of $I$, and the resulting Ramanujan-sum cross terms are controlled by estimates for gcd growth of linear forms modulo composite $n$. This method proved the conjecture for all $n>10^{20}$, thereby reducing the problem to a finite range of moduli.

Pendleton~\cite{Pe25} subsequently sharpened Ge's Fourier-analytic framework and reduced the explicit asymptotic threshold from $10^{20}$ to $4.6\times10^{13}$. The improvement comes from refined estimates for the Ramanujan-sum contributions and an optimized choice of the smoothing parameter in the Fourier approximation. Pendleton also computationally verified the conjecture for all $n<1.8\times10^6$ using high-performance computation and
strengthened structural lemmas governing forbidden linear relations of the form $x\pm3y\equiv0$ and $3x\pm y\equiv0\pmod n$.

We remove the size restriction completely and prove the following theorem.

\begin{theorem}\label{thm:full}
Let $n$ be a positive integer with $(n,6)=1$. Then every minimal zero-sum sequence of length four over $C_n$ has index one.
\end{theorem}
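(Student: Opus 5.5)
We follow the route announced in the abstract. First we reduce by gcd. Write $S=(a_1)(a_2)(a_3)(a_4)$ with $a_1+a_2+a_3+a_4\equiv0\pmod n$, and set $d=\gcd(a_1,\dots,a_4,n)$. Minimality passes to the quotient, and index is preserved on passing to $C_{n/d}$. So we may assume the terms generate $C_n$. If some term $a_i$ is a nonunit, we invoke the theorem of Shen--Xia--Li~\cite{SXL14}, which settles exactly the case where some term fails to generate $C_n$. We are left with the \emph{unit case}. Multiplying by $a_4^{-1}$, we may write $S=(x)(y)(z)(1)$ with $x,y,z\in \Um:=(\Z/n\Z)^\times$, where $x+y+z+1\equiv0$ and no proper subsum vanishes.

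Next comes the index-two residue identity. For each $g\in\Um$, the normalized sum $\sum_j (ga_j)_n/n$ is an integer in $\{1,2,3\}$. The map $g\mapsto -g$ sends the value $s$ to $4-s$. So if the index were at least $2$, then every $g$ would give the value exactly $2$. We assume this for contradiction, which gives
\[
\sum_{j=1}^4 \Bigl(\frac{(ga_j)_n}{n}-\frac12\Bigr)=0\qquad\text{for all } g\in\Um.
\]
Let $f(t)=(t)_n/n-1/2$ on $\Um$; this function is odd. Let $\mu=\sum_j\delta_{a_j}$ be the multiset measure on $\Um$. The identity says the multiplicative convolution of $f$ with $\mu$ (in the variable $g$) vanishes identically. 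Taking Fourier transforms over the dual group $\widehat{\Um}$, we get $\hat f(\chi)\,\overline{\hat\mu(\chi)}=0$ for every $\chi$; this is the character-moment relation. For even $\chi$ we have $\hat f(\chi)=0$ automatically. For odd $\chi$, $\hat f(\chi)$ is, up to normalization, the first moment $\sum_t \chi(t)\,t$. Reducing to the primitive character $\chi^*$ mod $f_\chi$, this moment equals $L(0,\chi^*)$ times an Euler-type factor $\prod_{p\mid n,\,p\nmid f_\chi}(1-\chi^*(p))$. Since $L(0,\chi^*)=-B_{1,\chi^*}\neq0$, the moment vanishes only when some $p\mid n$ with $p\nmid f_\chi$ has $\chi^*(p)=1$. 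Hence $\hat\mu(\chi)$ vanishes for every odd $\chi$ outside an exceptional set $E$ of odd characters, namely those admitting such a prime.

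The main step is to count $E$, and this is where I expect the real work. We will bound $|E|$ by summing, over primes $p\mid n$, the proportion of odd characters that factor through $\Um/\langle p\rangle$-type constraints. Using $(n,6)=1$, so that $p\ge5$, the worst configurations (such as $5$, $7$, $11$ dividing $n$, with small orders of $p$ modulo the cofactor) must be handled by an explicit finite optimization. The aim is $|E|\le 157\varphi(n)/1440<\varphi(n)/9$. I would first try inclusion--exclusion over the primes together with the crude bound that $\chi^*(p)=1$ cuts out at most a $1/\mathrm{ord}$ fraction; the delicate part is that $\mathrm{ord}(p)$ can be small, so cases with tiny multiplicative orders need separate treatment.

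Finally, we conclude by uncertainty. Let $\nu(t)=\mu(t)-\mu(-t)$, the odd part of $\mu$. Then $\hat\nu$ is supported on odd characters where $\hat\mu\ne0$, hence inside $E$. By the finite-group uncertainty principle, $|\mathrm{supp}\,\nu|\cdot|\mathrm{supp}\,\hat\nu|\ge\varphi(n)$. Since $|\mathrm{supp}\,\nu|\le 8$ and $|E|<\varphi(n)/9$, this is impossible unless $\nu=0$. So the multiset $\{a_j\}$ is invariant under negation. Since $n$ is odd, $a\neq -a$, so the multiset pairs up as $a,-a$, and the subsequence $(a)(-a)$ is a proper zero-sum subsequence. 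This contradicts minimality, so the index is one.
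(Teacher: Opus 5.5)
Your outline follows the paper's proof step for step. It has the gcd reduction, Shen--Xia--Li for the nonunit case, the identity $\sum_j (ga_j)_n=2n$, the moment relation, the factorization $M_n(\chi)=M_f(\psi)\prod_{p\mid n,\,p\nmid f}(1-\psi(p))$ with $M_f(\psi)=-L(0,\psi)\neq0$, and the uncertainty principle applied to $\nu(x)=\mu(x)-\mu(-x)$. Your centering $f(t)=(t)_n/n-\tfrac12$ is a harmless variant that removes the principal character automatically.

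\textbf{The gap.} The genuine gap is the step you yourself flag as the main one. The bound $|E|<\varphi(n)/8$ is asserted, with the constant $157/1440$ taken from the abstract, but never proved, and the whole contradiction rests on it. The plan you sketch would not suffice as stated. A union bound that only uses ``$\chi^*(p)=1$ cuts out a $1/\mathrm{ord}$ fraction'' cannot be uniform in the number $r$ of prime factors of $n$, because $\sum 1/(p-1)$ over the prime divisors is unbounded. Already for $r=2$ the trivial information fails. The relevant quantity is $\tfrac12\sum_i 1/(\varphi(q_i)d_i)$, which must be $<\tfrac18$. With $(p_1,p_2)=(5,7)$, even $d_1\ge2$, $d_2\ge1$ only gives $\tfrac18+\tfrac16>\tfrac14$ for $\sum_i 1/(\varphi(q_i)d_i)$.

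\textbf{What is missing.} Three ingredients are needed.

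First, write $E=\bigcup_i E_i$. Here $E_i$ consists of the odd $\chi$ whose $p_i$-component is principal and whose remaining part $\chi^{(i)}\in\widehat{U_{m_i}}$, with $m_i=n/p_i^{\alpha_i}$, satisfies $\chi^{(i)}(p_i)=1$. Counting characters of $U_{m_i}$ that are trivial on $\langle p_i\rangle$ and take the value $-1$ at $-1$ gives
\[
|E_i|\le\frac{\varphi(m_i)}{2d_i}=\frac{\varphi(n)}{2\varphi(p_i^{\alpha_i})d_i},\qquad d_i=\operatorname{ord}_{m_i}(p_i).
\]
The factor $1/\varphi(p_i^{\alpha_i})$, which comes from forcing the $p_i$-component to be trivial, is essential.

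Second, you need a uniform lower bound on the orders. Order the primes $p_1<\dots<p_r$ and take $i<r$. Then $\prod_{j>i}p_j$ is coprime to $p_i-1$, so it divides $(p_i^{d_i}-1)/(p_i-1)$. Comparing sizes forces $d_i\ge r-i+2$.

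Third, you need a finite case check.
\begin{itemize}
\item $r=1$ gives $E=\varnothing$.
\item For $r=2$, the pair $(5,7)$ must use the actual orders $\operatorname{ord}_7 5=6$ and $\operatorname{ord}_5 7=4$. The generic bound $d_1\ge3$ yields exactly $\tfrac1{12}+\tfrac16=\tfrac14$, i.e.\ only $|E|\le\varphi(n)/8$, and the uncertainty principle needs strict inequality.
\item For $r=3,4,5$, compute directly; the worst case is $157/720$ at $r=3$.
\item For $r\ge6$, use $p_i\ge 2i+3$ and a majorant $D_r$ that is decreasing in $r$.
\end{itemize}
Without these ingredients your argument stops short of the contradiction.
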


Our proof combines the previously known nonunit case with a new treatment of the unit case. After the standard gcd reduction, a minimal counterexample with at least one nonunit term is excluded by the theorem of Shen--Xia--Li~\cite[Theorem~1.3]{SXL14}. It therefore remains to consider the case in which all four terms are units modulo
$n$. In this case, the index-two condition yields the exact identity
\[
\sum_{j=1}^{4}(ga_j)_n=2n
\qquad (g\in U_n).
\]
Applying multiplicative characters to this identity leads to a relation governed by the first moments of Dirichlet characters.
An explicit factorization of these moments isolates an exceptional set of odd characters, whose cardinality is less than
$\varphi(n)/9$. A finite-group uncertainty principle then forces the four-term multiset to be invariant under negation, contradicting minimality. Apart from standard facts about primitive Dirichlet $L$-functions, the remaining argument is finite and requires neither smoothing nor asymptotic error terms.

The paper is organized as follows. Section~2 reviews the necessary character-theoretic preliminaries, derives the multiplicative moment identity, and establishes the required bound for the exceptional odd spectrum. Section~3 performs the minimal-modulus reduction, separates the nonunit and unit cases, and completes the proof of
Theorem~\ref{thm:full} by means of the finite-group uncertainty principle.


\section{Preliminaries}\label{sec:preliminaries}

We recall the standard character-theoretic notation used throughout
the proof. 

Throughout this section, let $n>1$. Set
\[
U_n=(\mathbb Z/n\mathbb Z)^\times,
\qquad
|U_n|=\varphi(n),
\]
and let $\widehat{U_n}$ denote the character group of $U_n$. We
identify the elements of $U_n$ with their representatives in
$\{1,2,\ldots,n-1\}$.

The principal character is denoted by $\chi_0$. Since
$(-1)^2=1$ in $U_n$, every $\chi\in\widehat{U_n}$ satisfies
\[
\chi(-1)\in\{1,-1\}.
\]
We call $\chi$ \emph{even} if $\chi(-1)=1$ and \emph{odd} if
$\chi(-1)=-1$. Since character values on $U_n$ are roots of unity,
\[
\chi(x)^{-1}=\overline{\chi(x)}
\qquad (x\in U_n).
\]

We shall also use the standard orthogonality relation
\[
\sum_{x\in U_n}\chi(x)
=
\begin{cases}
\varphi(n), & \chi=\chi_0,\\
0, & \chi\ne\chi_0.
\end{cases}
\]

As usual, a character $\chi\in\widehat{U_n}$ is extended to a
Dirichlet character modulo $n$ by setting
\[
\chi(a)=0
\qquad\text{whenever }(a,n)>1.
\]
We use the standard notions of induced and primitive Dirichlet
characters. Thus, if $\chi$ is induced by a primitive character
$\psi$ modulo $f\mid n$, then $f$ is the conductor of $\chi$.

For $\chi\in\widehat{U_n}$, define its first moment by
\begin{equation}\label{eq:first-moment}
M_n(\chi)
:=
\frac1n\sum_{x\in U_n}x\chi(x).
\end{equation}


We first derive the basic moment identity underlying the multiplicative Fourier argument.

\begin{lemma}
\label{lem:moment-identity}
Let $S=(a_1)(a_2)(a_3)(a_4)$ be a zero-sum sequence over $C_n$ with
$a_1,a_2,a_3,a_4\in U_n$ and $\operatorname{ind}(S)=2$. Then, for every nonprincipal character
$\chi\in\widehat{U_n}$,
\begin{equation}\label{eq:moment-identity}
M_n(\chi)
\sum_{j=1}^{4}\overline{\chi(a_j)}
=0.
\end{equation}
\end{lemma}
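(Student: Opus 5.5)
The plan is to first show that the index-two hypothesis forces the \emph{exact} identity $\sum_{j=1}^4 (ga_j)_n = 2n$ for every $g\in U_n$. I will then multiply this identity by $\chi(g)$, sum over $U_n$, and reparametrize each inner sum multiplicatively, so that the first moment $M_n(\chi)$ of \eqref{eq:first-moment} factors out.

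\emph{Step 1 (rigidity of the sum).} Fix $g\in U_n$. Since each $a_j$ is a unit, each $ga_j$ is a unit, so $1\le (ga_j)_n\le n-1$. Because $S$ is zero-sum, $\sum_j (ga_j)_n$ is a multiple of $n$ lying strictly between $0$ and $4n$. Hence it equals $n$, $2n$ or $3n$. The value $n$ is excluded because $\operatorname{ind}(S)=2$. For units we have $(-x)_n = n-(x)_n$, so
\[
\sum_j(-ga_j)_n = 4n-\sum_j (ga_j)_n .
\]
If the sum for $g$ were $3n$, the multiplier $-g\in U_n$ would give sum $n$, again contradicting $\operatorname{ind}(S)=2$. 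Therefore $\sum_{j=1}^4 (ga_j)_n = 2n$ for all $g\in U_n$.

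\emph{Step 2 (character transform).} Let $\chi\neq\chi_0$. Multiply the identity by $\chi(g)$ and sum over $g\in U_n$. By the orthogonality relation, the right-hand side becomes $2n\sum_g\chi(g)=0$. On the left, I treat each $j$ separately and substitute $x=ga_j$. Since $a_j\in U_n$, this substitution is a bijection of $U_n$, with $g=xa_j^{-1}$. This gives
\[
\chi(g)=\chi(x)\chi(a_j)^{-1}=\chi(x)\overline{\chi(a_j)} .
\]
So the $j$-th inner sum equals
\[
\overline{\chi(a_j)}\sum_{x\in U_n} x\chi(x) = n\,M_n(\chi)\,\overline{\chi(a_j)} .
\]
Summing over $j$ and dividing by $n$ yields \eqref{eq:moment-identity}.

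No step is a real obstacle. The only point needing care is Step 1: the index-two hypothesis only says that the minimum over $g$ is $2$, and the negation symmetry $g\mapsto -g$ is what upgrades this to equality for \emph{every} $g$. This is exactly where the hypothesis that all $a_j$ are units is used, since it rules out $(ga_j)_n=0$.
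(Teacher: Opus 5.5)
Your proposal is correct and follows the paper's proof essentially line by line. Like the paper, you use the negation symmetry $\sum_j(-ga_j)_n=4n-\sum_j(ga_j)_n$ together with the three possible values $n,2n,3n$ to force $\sum_j(ga_j)_n=2n$ for every $g\in U_n$, and then multiply by $\chi(g)$, sum over $U_n$, and substitute $x=ga_j$ to factor out $M_n(\chi)$. One small correction to your closing remark: in Step 1, nonzero terms already suffice to rule out $(ga_j)_n=0$. The unit hypothesis is really needed in Step 2, where $x=ga_j$ must be a bijection of $U_n$.
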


\begin{proof}
For $g\in U_n$, set
\[
t(g):=\frac1n\sum_{j=1}^{4}(ga_j)_n.
\]
Since $S$ is zero-sum, $t(g)$ is an integer. As
$a_j\in U_n$, each $(ga_j)_n$ lies in $\{1,\ldots,n-1\}$, and hence
\[
t(g)\in\{1,2,3\}.
\]
Moreover,
\[
(-ga_j)_n=n-(ga_j)_n
\qquad (1\le j\le4),
\]
so
\[
t(g)+t(-g)=4.
\]
Because $\operatorname{ind}(S)=2$, we have $t(g)\ge2$ for every
$g\in U_n$. Applying the same inequality to $-g$ gives
$t(-g)\ge2$, and therefore
\[
t(g)=t(-g)=2
\qquad (g\in U_n).
\]
Equivalently,
\begin{equation}\label{eq:index-two-residue}
\sum_{j=1}^{4}(ga_j)_n=2n
\qquad (g\in U_n).
\end{equation}

Now let $\chi\in\widehat{U_n}$ be nonprincipal. Multiplying
\eqref{eq:index-two-residue} by $\chi(g)$ and summing over
$g\in U_n$, the right-hand side vanishes by character orthogonality:
\[
\sum_{g\in U_n}\chi(g)=0.
\]
Thus
\[
\sum_{j=1}^{4}
\sum_{g\in U_n}(ga_j)_n\chi(g)=0.
\]

Fix $j$. Since $a_j\in U_n$, multiplication by $a_j$ is a bijection
of $U_n$. With the change of variables $x=ga_j$, we have
$g=xa_j^{-1}$, and hence
\[
\chi(g)
=
\chi(x)\chi(a_j^{-1})
=
\chi(x)\overline{\chi(a_j)}.
\]
Therefore
\[
\begin{aligned}
\sum_{g\in U_n}(ga_j)_n\chi(g)
&=
\overline{\chi(a_j)}
\sum_{x\in U_n}x\chi(x)\\
&=
n\,\overline{\chi(a_j)}\,M_n(\chi).
\end{aligned}
\]
Summing over $j=1,\ldots,4$ gives
\[
n\,M_n(\chi)
\sum_{j=1}^{4}\overline{\chi(a_j)}=0.
\]
Dividing by $n$ proves
\[
M_n(\chi)
\sum_{j=1}^{4}\overline{\chi(a_j)}=0.
\]
\end{proof}


Lemma~\ref{lem:moment-identity} shows that, for every nonprincipal
character $\chi$ with $M_n(\chi)\ne0$,
\[
\sum_{j=1}^{4}\overline{\chi(a_j)}=0.
\]
Thus the only characters for which the moment identity may fail to
force this vanishing are those satisfying $M_n(\chi)=0$. We now
determine precisely when this occurs.

\begin{lemma}
\label{lem:moment-factorization}
Let $\chi$ be a nonprincipal Dirichlet character modulo $n$, induced
by a primitive character $\psi$ of conductor $f\mid n$. Then
\begin{equation}\label{eq:moment-factorization}
M_n(\chi)
=
M_f(\psi)
\prod_{\substack{p\mid n\\ p\nmid f}}
\bigl(1-\psi(p)\bigr).
\end{equation}
\end{lemma}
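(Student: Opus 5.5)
The plan is to compute $nM_n(\chi)=\sum_{x=1}^{n} x\chi(x)$ directly. First remove the coprimality condition with Möbius inversion, then evaluate the resulting complete sums of the primitive character $\psi$ using its periodicity modulo $f$. Since $\chi$ is nonprincipal, its conductor satisfies $f>1$, so $\psi$ is a nonprincipal character modulo $f$ and
\[
\sum_{r=1}^{f}\psi(r)=0.
\]
This vanishing is what makes everything collapse.

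\textbf{Step 1 (Möbius inversion).} Let $m=\prod_{p\mid n,\,p\nmid f}p$. For every integer $x$ we claim
\[
\chi(x)=\psi(x)\sum_{d\mid (x,m)}\mu(d).
\]
If $(x,n)=1$, the right side is $\psi(x)=\chi(x)$. If $(x,n)>1$, either some prime dividing $f$ divides $x$, in which case $\psi(x)=0$, or some prime of $m$ divides $x$, in which case the divisor sum vanishes.

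Substitute this into the definition, write $x=dy$, and use the multiplicativity of $\psi$. Since $d\mid m\mid n$, this gives
\[
nM_n(\chi)=\sum_{d\mid m}\mu(d)\psi(d)\,d\sum_{y=1}^{n/d} y\,\psi(y).
\]

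\textbf{Step 2 (complete sums).} For any multiple $N=kf$ of $f$, write $y=r+\ell f$ with $1\le r\le f$ and $0\le \ell<k$. Then
\[
\sum_{y=1}^{N}y\psi(y)=k\sum_{r=1}^{f}r\psi(r)+f\Bigl(\sum_{\ell=0}^{k-1}\ell\Bigr)\sum_{r=1}^{f}\psi(r)=kfM_f(\psi)=N\,M_f(\psi).
\]
We apply this with $N=n/d$. This is legitimate: $d$ is squarefree and coprime to $f$, and both $d$ and $f$ divide $n$, so $df\mid n$ and hence $f\mid n/d$.

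\textbf{Step 3 (assemble).} Each term in Step 1 becomes $\mu(d)\psi(d)\,d\cdot (n/d)M_f(\psi)$. Therefore
\[
nM_n(\chi)=nM_f(\psi)\sum_{d\mid m}\mu(d)\psi(d)=nM_f(\psi)\prod_{p\mid m}\bigl(1-\psi(p)\bigr),
\]
by the Euler product for the squarefree $m$. Dividing by $n$ yields \eqref{eq:moment-factorization}.

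The only delicate points are checking that $f\mid n/d$, so that Step 2 applies, and checking that the Möbius identity handles nonunits correctly; both are routine.
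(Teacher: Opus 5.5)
Your proof is correct and follows essentially the same route as the paper's: M\"obius inclusion--exclusion over the squarefree product of the primes dividing $n$ but not $f$, the substitution $x=dy$, evaluation of $\sum_{y\le n/d}y\psi(y)$ by periodicity modulo $f$ using $\sum_{r=1}^{f}\psi(r)=0$, and the identity $\sum_{d\mid m}\mu(d)\psi(d)=\prod_{p\mid m}\bigl(1-\psi(p)\bigr)$. Your pointwise identity $\chi(x)=\psi(x)\sum_{d\mid (x,m)}\mu(d)$ and your explicit check that $df\mid n$ state the same steps a little more carefully than the paper does.
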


\begin{proof}
Let
\[
\mathcal P
=
\{p:p\mid n,\ p\nmid f\},
\qquad
Q:=\prod_{p\in\mathcal P}p,
\]
with $Q=1$ if $\mathcal P=\varnothing$.

Since $\chi$ is induced by $\psi$, we have
\[
\chi(a)=\psi(a)
\qquad\text{whenever }(a,n)=1.
\]
Hence
\[
\sum_{x\in U_n}x\chi(x)
=
\sum_{\substack{1\le a\le n\\(a,n)=1}}a\psi(a).
\]
Moreover, $\psi(a)=0$ whenever $(a,f)>1$. Thus it remains only
to exclude those additional prime divisors in $\mathcal P$. Therefore, by inclusion--exclusion,
\begin{align*}
\sum_{x\in U_n}x\chi(x)
& = \sum_{\substack{1\le a\le n\\(a,Q)=1}}a\psi(a) \\
& = \sum_{1\le a\le n}a\psi(a)\sum_{D\mid (a, Q)}\mu_{\mathrm{Mob}}(D)\\
& = \sum_{D\mid Q}\mu_{\mathrm{Mob}}(D)
\sum_{\substack{1\le a\le n\\D\mid a}}
a\psi(a),
\end{align*}
where $\mu_{\mathrm{Mob}}$ denotes the M\"obius function. Writing
$a=Db$ and using $(D,f)=1$, we obtain
\[
\sum_{x\in U_n}x\chi(x)
=
\sum_{D\mid Q}
\mu_{\mathrm{Mob}}(D)D\psi(D)
\sum_{1\le b\le n/D}b\psi(b).
\]

Since $Df\mid n$, put
\[
q_D:=\frac{n}{Df}.
\]
Using the periodicity of $\psi$ modulo $f$, we have
\[
\begin{aligned}
\sum_{1\le b\le n/D}b\psi(b)
&=
\sum_{r=0}^{q_D-1}
\sum_{c=1}^{f}(rf+c)\psi(c)\\
&=
f\left(\sum_{r=0}^{q_D-1}r\right)
\left(\sum_{c=1}^{f}\psi(c)\right)
+
q_D\sum_{c=1}^{f}c\psi(c).
\end{aligned}
\]
Since $\chi$ is nonprincipal, its primitive inducing character
$\psi$ is also nonprincipal. Hence
\[
\sum_{c=1}^{f}\psi(c)=0.
\]
Consequently,
\[
\sum_{1\le b\le n/D}b\psi(b)
=
\frac{n}{Df}
\sum_{c=1}^{f}c\psi(c).
\]
Substituting this identity above gives
\[
\sum_{x\in U_n}x\chi(x)
=
\frac nf
\left(\sum_{c=1}^{f}c\psi(c)\right)
\sum_{D\mid Q}\mu_{\mathrm{Mob}}(D)\psi(D).
\]
Finally,
\[
\sum_{D\mid Q}\mu_{\mathrm{Mob}}(D)\psi(D)
= \sum_{A\subseteq\mathcal P}
(-1)^{|A|}
\prod_{p\in A}\psi(p) = 
\prod_{p\in\mathcal P}\bigl(1-\psi(p)\bigr).
\]
Since $\psi(c)=0$ whenever $(c,f)>1$, we have
\[
\frac1f\sum_{c=1}^{f}c\psi(c)
=
M_f(\psi).
\]
Therefore, dividing by $n$ gives
\[
M_n(\chi)
=
M_f(\psi)
\prod_{\substack{p\mid n\\p\nmid f}}
(1-\psi(p)),
\]
as required.
\end{proof}


\begin{lemma}
\label{lem:primitive-nonvanishing}
If $\psi$ is a primitive odd Dirichlet character modulo $f$, then
\[
M_f(\psi)\ne0.
\]
\end{lemma}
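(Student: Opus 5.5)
The plan is to identify $M_f(\psi)$ with a generalized Bernoulli number and then with a value of the Dirichlet $L$-function at $s=1$. For $f=1$ there are no odd characters, so assume $f>1$. First, $M_f(\psi)=\frac1f\sum_{c=1}^{f}c\psi(c)$ is, up to sign, the first generalized Bernoulli number. Recall
\[
B_{1,\psi}=\sum_{c=1}^{f}\psi(c)\Bigl(\frac cf-\frac12\Bigr),
\]
and since $\psi$ is nonprincipal, $\sum_{c=1}^f\psi(c)=0$. Hence $B_{1,\psi}=M_f(\psi)$, and it suffices to show $B_{1,\psi}\ne0$.

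The second step links $B_{1,\psi}$ to $L(1,\bar\psi)$. For a primitive odd character one has the classical formula
\[
L(1,\psi)=\frac{\pi i\,\tau(\psi)}{f}\,B_{1,\bar\psi},
\]
where $\tau(\psi)=\sum_{a=1}^{f}\psi(a)e^{2\pi i a/f}$ is the Gauss sum. To derive it, I would expand $\psi(n)$ via the Gauss sum; by primitivity, $\bar\tau(\psi)\psi(n)=\sum_a\bar\psi(a)e^{2\pi i an/f}$ holds for all $n$, including $n$ not coprime to $f$. Then sum $\sum_n e^{2\pi i an/f}/n=-\log(1-e^{2\pi i a/f})$, whose imaginary part is $\pi(\tfrac12-\tfrac af)$ for $0<a<f$. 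Because $\psi$ is odd, the real parts cancel in the sum over $a$, and the imaginary parts yield $B_{1,\bar\psi}$. Primitivity is what gives $|\tau(\psi)|=\sqrt f\ne0$ and the validity of the Gauss sum expansion for all $n$.

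The third step is to invoke the standard fact that $L(1,\chi)\ne0$ for every nonprincipal Dirichlet character (Dirichlet's theorem). Applying the formula to $\bar\psi$, which is also primitive and odd, gives
\[
L(1,\bar\psi)=\frac{\pi i\,\tau(\bar\psi)}{f}B_{1,\psi},
\]
and so $B_{1,\psi}\ne0$. The result is then $M_f(\psi)\ne0$.

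The main obstacle is the nonvanishing $L(1,\psi)\neq 0$. This is deep for real characters, where it follows from the class number formula, but it is exactly the "standard fact about primitive Dirichlet $L$-functions" the paper allows us to assume. For complex odd $\psi$ there is an alternative elementary route, but I would simply cite it. The only care needed is bookkeeping of the Gauss sum expansion and the sign convention of the Bernoulli number, neither of which affects nonvanishing.
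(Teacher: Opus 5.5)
Your proof is correct, but it reaches the key identity by a different path than the paper.

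\textbf{The paper's route.} It works at $s=0$. It quotes $L(0,\psi)=-M_f(\psi)$ (i.e.\ $L(0,\psi)=-B_{1,\psi}$) from Apostol. It then evaluates the functional equation of the primitive $L$-function at $s=1$ to get $L(0,\psi)=-\frac{i}{\pi}G(1,\psi)L(1,\overline{\psi})$. It concludes from $|G(1,\psi)|^2=f$ and $L(1,\overline{\psi})\ne0$.

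\textbf{Your route.} You stay at $s=1$ throughout. The identification $M_f(\psi)=B_{1,\psi}$ is immediate from $\sum_c\psi(c)=0$. The relation $L(1,\overline{\psi})=\frac{\pi i\,\tau(\overline{\psi})}{f}B_{1,\psi}$ comes directly from three ingredients: the finite Fourier expansion of $\psi$ (valid for all $n$ by primitivity), the series $\sum_{n\ge1}e^{2\pi i an/f}/n=-\log(1-e^{2\pi i a/f})$, and the parity cancellation of the real parts.

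\textbf{Comparison.} Since $\tau(\psi)\tau(\overline{\psi})=\psi(-1)f=-f$, your relation and the paper's are literally the same identity. Both proofs rest on the same two inputs: nonvanishing of the Gauss sum and Dirichlet's $L(1,\chi)\ne0$.

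Your route needs neither analytic continuation to $s=0$ nor the functional equation. It only uses an elementary manipulation of conditionally convergent series: a finite sum over $a$ interchanged with partial sums over $n$; the term $a=f$ dropping out because $\overline{\psi}(f)=0$; and the principal branch, which gives imaginary part $\pi(\frac12-\theta)$ for $-\log(1-e^{2\pi i\theta})$ when $0<\theta<1$. The paper's route buys brevity at the cost of citing two heavier textbook theorems.

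One notational point: the expansion reads $\sum_a\overline{\psi}(a)e^{2\pi i an/f}=\psi(n)\,\tau(\overline{\psi})$. For odd $\psi$ one has $\tau(\overline{\psi})=-\overline{\tau(\psi)}$, so your ``$\bar\tau(\psi)$'' should be read as $\tau(\overline{\psi})$. As you say, this sign is irrelevant to nonvanishing.
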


\begin{proof}
Since $\psi$ is odd, it is nonprincipal. By
\cite[Theorem~12.20, p.~268]{Apostol1976},
\[
L(0,\psi)
=
-\frac1f\sum_{a=1}^{f}a\psi(a)
=
-M_f(\psi).
\]
It therefore suffices to show that $L(0,\psi)\ne0$.

Let
\[
G(1,\psi)
:=
\sum_{a=1}^{f}\psi(a)e^{2\pi i a/f}
\]
be the Gauss sum associated with $\psi$. For a primitive character
$\psi$, the functional equation
\cite[\S12.11, Eq.~(16), p.~263]{Apostol1976} gives
\[
L(1-s,\psi)
=
\frac{f^{\,s-1}\Gamma(s)}{(2\pi)^s}
\left(
e^{-\pi i s/2}
+\psi(-1)e^{\pi i s/2}
\right)
G(1,\psi)L(s,\overline{\psi}).
\]
Since $\psi$ is odd, $\psi(-1)=-1$. Setting $s=1$ therefore yields
\[
L(0,\psi)
=
-\frac{i}{\pi}\,
G(1,\psi)L(1,\overline{\psi}).
\]
Because $\psi$ is primitive,
\[
|G(1,\psi)|^2=f,
\]
by \cite[Theorem~8.15(c)]{Apostol1976}; in particular,
$G(1,\psi)\ne0$. Moreover, $\overline{\psi}$ is nonprincipal, and the classical nonvanishing theorem gives
\[
L(1,\overline{\psi})\ne0;
\]
see \cite[p.~149]{Apostol1976}.
Consequently, $L(0,\psi)\ne0$, and hence $M_f(\psi)\ne0$.
\end{proof}


Define the exceptional odd spectrum by
\begin{equation}\label{eq:Zn-def}
Z_n
:=
\left\{
\chi\in\widehat{U_n}:
\chi(-1)=-1,\;
M_n(\chi)=0
\right\}.
\end{equation}
The set $Z_n$ consists precisely of the odd characters for which
Lemma~\ref{lem:moment-identity} gives no information on the character
sum, since the coefficient $M_n(\chi)$ vanishes. Our next goal is to
describe these characters explicitly and then estimate their number.

Let $\chi$ be an odd character modulo $n$, induced by the primitive
character $\psi$ of conductor $f\mid n$. Since induction preserves
parity,
\[
\psi(-1)=\chi(-1)=-1,
\]
so $\psi$ is also odd. Lemma~\ref{lem:primitive-nonvanishing} therefore
gives
\[
M_f(\psi)\ne0.
\]
Combining this with the factorization
\eqref{eq:moment-factorization}, we obtain
\begin{equation}\label{eq:moment-zero-characterization}
M_n(\chi)=0
\quad\Longleftrightarrow\quad
\psi(p)=1
\ \text{for some }p\mid n\text{ with }p\nmid f.
\end{equation}
Thus the vanishing of $M_n(\chi)$ is caused entirely by primes dividing
$n$ that are absent from the conductor of $\chi$.

We now translate this condition into the Chinese remainder
decomposition of the character group. From this point to the end of
the section, assume $(n,6)=1$, and write
\[
n=\prod_{i=1}^{r}q_i,
\qquad
q_i=p_i^{\alpha_i},
\qquad
5\le p_1<\cdots<p_r.
\]
By the Chinese remainder theorem,
\[
U_n\cong\prod_{i=1}^{r}U_{q_i},
\qquad
\widehat{U_n}\cong\prod_{i=1}^{r}\widehat{U_{q_i}}.
\]
Accordingly, every character $\chi\in\widehat{U_n}$ can be written
uniquely in the form
\[
\chi=\prod_{i=1}^{r}\chi_i,
\qquad
\chi_i\in\widehat{U_{q_i}}.
\]

For each $i$, put
\[
m_i:=\frac{n}{q_i},
\]
and let $\chi^{(i)}\in\widehat{U_{m_i}}$ denote the character formed
from all components $\chi_j$ with $j\ne i$. The prime $p_i$ is absent
from the conductor of $\chi$ precisely when the local component
$\chi_i$ is principal. In this case, the primitive inducing character
satisfies
\[
\psi(p_i)=\chi^{(i)}(p_i).
\]
Moreover, since $\chi_i$ is principal,
\[
\chi(-1)=\chi^{(i)}(-1).
\]
Consequently, for such a prime $p_i$, the two conditions
\[
\chi(-1)=-1,
\qquad
\psi(p_i)=1
\]
are equivalent to
\[
\chi^{(i)}(-1)=-1,
\qquad
\chi^{(i)}(p_i)=1.
\]

This motivates the definition
\[
E_i
:=
\left\{
\chi\in\widehat{U_n}:
\chi(-1)=-1,\;
\chi_i=1,\;
\chi^{(i)}(p_i)=1
\right\},
\]
where $1$ denotes the principal character of $U_{q_i}$.
By \eqref{eq:moment-zero-characterization}, every character in $Z_n$
belongs to at least one of the sets $E_i$, and conversely every
character in some $E_i$ belongs to $Z_n$. Hence
\begin{equation}\label{eq:Zn-union}
Z_n=\bigcup_{i=1}^{r}E_i.
\end{equation}
The union need not be disjoint, since a character may satisfy the
vanishing condition for more than one prime divisor of $n$.

Finally, because $\chi_i$ is principal for $\chi\in E_i$, the map
\[
\chi\longmapsto\chi^{(i)}
\]
identifies $E_i$ with the set
\[
\left\{
\rho\in\widehat{U_{m_i}}:
\rho(p_i)=1,\;
\rho(-1)=-1
\right\}.
\]
If
\[
H_i:=\langle p_i\bmod m_i\rangle\le U_{m_i},
\]
then the condition $\rho(p_i)=1$ is equivalent to $\rho$ being
trivial on $H_i$. Thus estimating $|E_i|$ reduces to counting
characters of a finite abelian group that are trivial on a prescribed
subgroup but take the value $-1$ at the element $-1$. The following
elementary lemma gives exactly this count.

\begin{lemma}
\label{lem:character-counting}
Let $A$ be a finite abelian group, let $H\le A$, and let $z\in A$
have order two. Among the characters $\rho\in\widehat A$ that are
trivial on $H$, the number satisfying $\rho(z)=-1$ is
\[
\begin{cases}
0, & z\in H,\\[1mm]
\dfrac{|A|}{2|H|}, & z\notin H.
\end{cases}
\]
\end{lemma}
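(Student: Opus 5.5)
The plan is to identify the characters of $A$ that are trivial on $H$ with the character group of the quotient, and then count those with value $-1$ at $z$ by a two-coset argument. Concretely, restriction along the projection $\pi:A\to A/H$ gives a bijection between $\widehat{A/H}$ and $\{\rho\in\widehat A:\rho|_H=1\}$. This is standard: every $\rho$ trivial on $H$ factors uniquely through $\pi$. Hence the set under consideration has exactly $|A/H|=|A|/|H|$ elements, and $\rho(z)=\bar\rho(\bar z)$, where $\bar z=\pi(z)$.

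First I treat the case $z\in H$. Then $\rho(z)=1$ for every such $\rho$, so the count is $0$.

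Now suppose $z\notin H$. Then $\bar z$ is a nonidentity element of $B:=A/H$, and $\bar z^2=1$. So $\bar z$ has order exactly two, and every $\bar\rho\in\widehat B$ satisfies $\bar\rho(\bar z)\in\{1,-1\}$. The evaluation map $\widehat B\to\{\pm1\}$, $\bar\rho\mapsto\bar\rho(\bar z)$, is a group homomorphism. It suffices to show that it is surjective. Then its kernel has index $2$, and exactly half of $\widehat B$, namely $|A|/(2|H|)$ characters, take the value $-1$ at $\bar z$.

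The only substantive step is surjectivity, that is, the existence of a character of $B$ that is nontrivial at a given nonidentity element $\bar z$. This is the standard fact that the characters of a finite abelian group separate points. I would justify it in one of two ways. The first is via orthogonality: $\sum_{\bar\rho\in\widehat B}\bar\rho(\bar z)=0$ for $\bar z\ne1$, which is the dual orthogonality relation. If every $\bar\rho(\bar z)$ were $1$, this sum would equal $|B|\neq0$. The second is directly: extend a nontrivial character of the cyclic subgroup $\langle\bar z\rangle$ to $B$, using that characters extend from subgroups of finite abelian groups. Either way the argument is routine, so I do not expect a genuine obstacle. The only care needed is to note that $\bar z\neq 1$ exactly when $z\notin H$.
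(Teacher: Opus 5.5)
Your proposal is correct and follows essentially the same route as the paper. Both identify the characters trivial on $H$ with $\widehat{A/H}$, observe that $zH$ has order two when $z\notin H$, and use surjectivity of the evaluation homomorphism $\widehat{A/H}\to\{\pm1\}$ to conclude that the fibre over $-1$ is a coset of an index-two kernel. The only difference is that you justify the separation-of-points fact, by dual orthogonality or by extension from $\langle\bar z\rangle$, whereas the paper simply invokes it as standard.
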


\begin{proof}
Let
\[
H^\perp
:=
\{\rho\in\widehat A:\rho(h)=1\text{ for every }h\in H\}
\]
be the annihilator of $H$. The characters in $H^\perp$ are precisely
the characters of $A$ that are trivial on $H$.

Every $\rho\in H^\perp$ induces a character $\widetilde\rho$ of the
quotient group $A/H$ by
\[
\widetilde\rho(aH):=\rho(a).
\]
This is well defined: if $aH=bH$, then $a^{-1}b\in H$, and hence
\[
\rho(b)
=
\rho(a)\rho(a^{-1}b)
=
\rho(a).
\]
Conversely, every character of $A/H$ lifts to a character of $A$
that is trivial on $H$. Thus
\[
H^\perp\cong\widehat{A/H}.
\]
Since a finite abelian group and its character group have the same
order,
\[
|H^\perp|
=
|\widehat{A/H}|
=
|A/H|
=
\frac{|A|}{|H|}.
\]

If $z\in H$, then every $\rho\in H^\perp$ is trivial on $z$, so
\[
\rho(z)=1.
\]
Hence no such character satisfies $\rho(z)=-1$.

Now suppose that $z\notin H$. Since $z$ has order two in $A$, we have
\[
(zH)^2=H.
\]
Moreover, $zH\ne H$ because $z\notin H$. Therefore $zH$ is a
nontrivial element of order two in $A/H$.

Consider the evaluation map
\[
\operatorname{ev}_{zH}:
\widehat{A/H}\longrightarrow\{\pm1\},
\qquad
\eta\longmapsto\eta(zH).
\]
This is a group homomorphism. Indeed,
\[
(\eta_1\eta_2)(zH)
=
\eta_1(zH)\eta_2(zH).
\]
Furthermore, since $zH$ has order two,
\[
\eta(zH)^2
=
\eta((zH)^2)
=
\eta(H)
=
1,
\]
so $\eta(zH)\in\{\pm1\}$ for every $\eta\in\widehat{A/H}$.

Because $zH$ is nontrivial, characters of the finite abelian group
$A/H$ separate points. Hence there exists
$\eta_0\in\widehat{A/H}$ such that
\[
\eta_0(zH)\ne1.
\]
Since the only possible values are $\pm1$, it follows that
\[
\eta_0(zH)=-1.
\]
Thus $\operatorname{ev}_{zH}$ is surjective.

Consequently its kernel has index two in $\widehat{A/H}$, and hence
\[
|\ker(\operatorname{ev}_{zH})|
=
\frac12|\widehat{A/H}|
=
\frac{|A|}{2|H|}.
\]
The fibre over $-1$ is a coset of the kernel, so it has the same
cardinality. Therefore exactly
\[
\frac{|A|}{2|H|}
\]
characters in $H^\perp$ satisfy $\rho(z)=-1$.
\end{proof}


If $r=1$, then $n=p^\alpha$ for some prime $p$. An odd character
cannot have conductor $1$, so its conductor is divisible by $p$.
Hence there is no prime $q\mid n$ with $q\nmid f$, and
\eqref{eq:moment-zero-characterization} gives
\[
Z_n=\varnothing.
\]
 We may therefore assume $r\ge2$. 
 
 For $1\le i\le r$, put
\[
d_i:=\operatorname{ord}_{m_i}(p_i).
\]
Since $r\ge2$ and $(n,6)=1$, each $m_i>1$ is odd, so $-1$
has order two in $U_{m_i}$. Applying Lemma~\ref{lem:character-counting} with
\[
A=U_{m_i},
\qquad
H=\langle p_i\bmod m_i\rangle,
\qquad
z=-1,
\]
gives
\[
|E_i|
\le
\frac{\varphi(m_i)}{2d_i}.
\]
Consequently, by \eqref{eq:Zn-union},
\begin{equation}\label{eq:Zn-union-bound}
|Z_n|
\le
\frac12\sum_{i=1}^{r}\frac{\varphi(m_i)}{d_i}
=
\frac{\varphi(n)}2
\sum_{i=1}^{r}
\frac{1}{\varphi(q_i)d_i}.
\end{equation}

It therefore remains to obtain a uniform lower bound for the
multiplicative orders $d_i$.

\begin{lemma}
\label{lem:order-lower-bound}
For every $i<r$,
\[
d_i\ge r-i+2.
\]
\end{lemma}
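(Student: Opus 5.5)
The plan is to compare the size of $p_i^{d_i}$ with the size of $m_i$. By definition $d_i=\operatorname{ord}_{m_i}(p_i)$, so
\[
m_i \mid p_i^{d_i}-1 .
\]
In particular $p_i^{d_i}-1\ge m_i$, and hence $p_i^{d_i}>m_i$.

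Next I would bound $m_i$ from below using only the primes larger than $p_i$. The modulus $m_i=n/q_i$ is divisible by $p_{i+1}p_{i+2}\cdots p_r$. Since $i<r$, this product has $r-i\ge1$ factors. Each factor satisfies $p_j\ge p_i+2$; in fact consecutive primes $\ge5$ differ by at least $2$, so $p_{i+k}\ge p_i+2k$. Therefore
\[
p_i^{d_i}>m_i\ge \prod_{k=1}^{r-i}p_{i+k}>p_i^{\,r-i}.
\]
This yields $d_i\ge r-i+1$ immediately.

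The main obstacle is the extra $+1$, that is, excluding $d_i=r-i+1$. Suppose $d_i=r-i+1=:s+1$ with $s=r-i\ge1$. I would compare $p_i^{s+1}-1$ with the available product in two cases, using the divisibility $p_i^{s+1}-1\equiv0\pmod{m_i}$.

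\textbf{Case $s\ge2$.} Use the growth of the product:
\[
\prod_{k=1}^{s}(p_i+2k)\ge (p_i+2)(p_i+4)p_i^{s-2}>p_i^{s},
\]
but this does not reach $p_i^{s+1}$. So the naive size argument fails here, and a finer approach is needed. I would write
\[
p_i^{s+1}-1=(p_i-1)(p_i^{s}+\cdots+1)
\]
and use that $p_{i+1},\dots,p_r$ are coprime to one another and exceed $p_i$. None of them divides $p_i-1$. Hence their product divides $\Phi$-type factors of $p_i^{s+1}-1$ coprime to $p_i-1$, apart from small gcd. This forces
\[
\prod_{k}p_{i+k}\le \frac{p_i^{s+1}-1}{p_i-1}\cdot\gcd\text{-correction}\approx p_i^{s}.
\]
That bound contradicts $\prod_k p_{i+k}\ge(p_i+2)\cdots(p_i+2s)>p_i^s+\cdots+1$, which is true once one checks $(p_i+2)^s$ against the geometric sum for $p_i\ge5$. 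The precise point to verify is that $p_{i+k}\nmid p_i-1$ and $\gcd\bigl(p_{i+k},\tfrac{p_i^{s+1}-1}{p_i-1}\bigr)$ handling. Odd primes dividing both $p_i-1$ and the geometric sum divide $s+1$, but the $p_{i+k}>p_i$ cannot divide $p_i-1$ at all. So the product of the $p_{i+k}$ divides $1+p_i+\cdots+p_i^{s}$ outright.

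\textbf{Case $s=1$.} The same argument gives $p_r\mid p_i+1$. Since $p_i+1$ is even and $p_r$ is odd with $p_r>p_i$, we get $p_r\le (p_i+1)/2<p_i$, a contradiction. Similarly, for general $s$ I would use oddness of the product versus the parity of $1+p_i+\cdots+p_i^s$ only when $s$ is odd; otherwise the inequality $(p_i+2)\cdots(p_i+2s)>1+p_i+\cdots+p_i^{s}$ suffices. This last inequality I would verify by induction on $s$ for $p_i\ge5$.
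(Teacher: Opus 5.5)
Your argument is correct and is essentially the paper's proof. Both use $P_i=p_{i+1}\cdots p_r\mid m_i\mid p_i^{d_i}-1$ and the coprimality $(P_i,p_i-1)=1$ (every prime factor of $P_i$ exceeds $p_i$) to get $P_i\mid 1+p_i+\cdots+p_i^{d_i-1}$, then finish with a size comparison against a lower bound for $P_i$. The paper runs this for all $d_i\le r-i+1$ at once, using $P_i\ge(p_i+2)p_i^{r-i-1}$ and $(p_i+2)(p_i-1)>p_i^2$, so your preliminary crude bound, the ``$\gcd$-correction'' detour, and the separate parity argument for $s=1$ are not needed.
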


\begin{proof}
Fix $i<r$, and put
\[
P_i:=\prod_{j=i+1}^{r}p_j.
\]
Since $P_i\mid m_i$ and
\[
p_i^{d_i}\equiv1\pmod{m_i},
\]
we have
\[
P_i\mid p_i^{d_i}-1.
\]
Every prime divisor of $P_i$ is larger than $p_i$, and hence
\[
(P_i,p_i-1)=1.
\]
Therefore
\[
P_i
\mid
\frac{p_i^{d_i}-1}{p_i-1}
=
1+p_i+\cdots+p_i^{d_i-1}.
\]

Suppose that $d_i\le r-i+1$. Then
\[
P_i
\le
\frac{p_i^{d_i}-1}{p_i-1}
\le
\frac{p_i^{r-i+1}-1}{p_i-1}
<
\frac{p_i^{r-i+1}}{p_i-1}.
\]
On the other hand, since the $r-i$ primes
$p_{i+1},\ldots,p_r$ are strictly larger than $p_i$,
\[
P_i\ge(p_i+2)p_i^{r-i-1}.
\]
Since $p_i>2$,
\[
(p_i+2)(p_i-1)>p_i^2,
\]
and hence
\[
(p_i+2)p_i^{r-i-1}
>
\frac{p_i^{r-i+1}}{p_i-1},
\]
a contradiction. Hence $ d_i\ge r-i+2$.
\end{proof}


\begin{lemma}
\label{lem:exceptional-spectrum}
If $(n,6)=1$, then
\[
|Z_n|
\le
\frac{157}{1440}\varphi(n)
<
\frac{\varphi(n)}9.
\]
\end{lemma}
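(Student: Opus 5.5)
The plan is to feed two inputs into \eqref{eq:Zn-union-bound}: Lemma~\ref{lem:order-lower-bound} for the indices $i<r$, and a separate argument for the last index $i=r$. We then show that
\[
\Sigma_r:=\sum_{i=1}^{r}\frac{1}{\varphi(q_i)d_i}\le\frac{157}{720},
\]
which gives $|Z_n|\le\frac{157}{1440}\varphi(n)$. The inequality $\frac{157}{1440}<\frac19=\frac{160}{1440}$ is immediate. If $r=1$ we already know $Z_n=\varnothing$, so we assume $r\ge2$ throughout.

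\emph{The last index.} Lemma~\ref{lem:order-lower-bound} says nothing about $d_r$, and indeed $d_r=1$ can occur (e.g.\ $n=5\cdot 11$). We split into two cases.
\begin{itemize}
\item If $d_r=1$, then $m_r\mid p_r-1$. Since $m_r$ is odd and $m_r\ge5$, and $p_r-1$ is even, we get $p_r-1\ge 2m_r\ge10$. Hence $\frac{1}{\varphi(q_r)d_r}\le\frac1{10}$.
\item If $d_r\ge2$, then $p_r\ge7$ because $r\ge2$. Hence the term is at most $\frac1{2\cdot 6}=\frac1{12}$.
\end{itemize}
In either case the last term is at most $\frac1{10}$.

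\emph{The indices $i<r$.} We use $\varphi(q_i)\ge p_i-1\ge c_i$, where $c_1,c_2,\dots=4,6,10,12,16,18,\dots$ are the values $p-1$ for the successive primes $p\ge5$. Combined with $d_i\ge r-i+2$, this gives
\[
\Sigma_r\le \frac1{10}+T_r,\qquad T_r:=\sum_{i=1}^{r-1}\frac{1}{c_i\,(r-i+2)}.
\]
The target becomes $T_r\le\frac{157}{720}-\frac1{10}=\frac{85}{720}$. For small $r$ we compute directly:
\[
T_2=\tfrac1{12},\qquad T_3=\tfrac1{16}+\tfrac1{18}=\tfrac{85}{720}.
\]
So $r=3$ is extremal, which explains the constant: $\frac1{16}+\frac1{18}+\frac1{10}=\frac{157}{720}$. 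We would then check $T_4,T_5,\dots$ up to some modest bound by hand; for instance $T_4=\frac1{20}+\frac1{24}+\frac1{30}$ is already smaller.

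\emph{Large $r$.} For the tail we use the crude bound $c_i\ge 3i$. It holds because the $i$-th integer $\ge5$ coprime to $6$ is at least $3i+1$. Then partial fractions give
\[
T_r\le\frac13\sum_{i=1}^{r-1}\frac1{i(r-i+2)}=\frac{1}{3(r+2)}\sum_{i=1}^{r-1}\Bigl(\frac1i+\frac1{r-i+2}\Bigr)\le\frac{2(1+\log r)}{3(r+2)}.
\]
This is below $\frac{85}{720}$ once $r$ is moderately large, roughly $r\ge 40$.

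\emph{Main obstacle.} The hardest step is bridging the finite range of intermediate $r$, where the crude estimate is not yet sufficient. To handle it, we would first sharpen the tail bound by keeping the exact first few $c_i$ and using $c_i\ge 3i$ only for $i\ge 6$ or so, so that the crossover drops to single digits. The remaining $r$ would then be checked directly, exactly as for $T_2$ and $T_3$.
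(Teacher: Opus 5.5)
The gap is the reduction to the uniform target $T_r\le\frac{85}{720}$ for all $r\ge2$. That inequality is false for $4\le r\le 8$, so the plan cannot be completed as stated.

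Your check of $T_4$ contains an arithmetic slip. In fact $T_4=\frac1{20}+\frac1{24}+\frac1{30}=\frac18=\frac{90}{720}>\frac{85}{720}$, and so $\frac1{10}+T_4=\frac{162}{720}>\frac{157}{720}$. The values rise before they fall: $T_5=\frac1{24}+\frac1{30}+\frac1{40}+\frac1{36}=\frac{92}{720}$, and $T_6\approx0.1252$, $T_7\approx0.1225$, $T_8\approx0.1183$ all still exceed $\frac{85}{720}\approx0.1181$. The first $r\ge4$ with $T_r\le\frac{85}{720}$ is $r=9$. Since $T_r$ is a definite number, which you computed with the exact $c_i$ and the bounds $d_i\ge r-i+2$, no sharper estimate of $T_r$ can rescue this target. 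In your decomposition $r=3$ is not the extremal case.

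What loses too much is the $r$-independent bound $\frac1{10}$ on the last term. Using only $d_r\ge1$ and the fact that $p_r$ is at least the $r$-th prime $\ge5$, that term is at most $\frac1{c_r}$. This is $\frac1{12}$ for $r=4$ and $\frac1{16}$ for $r=5$, giving $\Sigma_4\le\frac18+\frac1{12}=\frac{150}{720}$ and $\Sigma_5\le\frac{92+45}{720}=\frac{137}{720}$. This is exactly how the paper argues. For $r\ge6$ the paper bounds every term, the last included, via $\varphi(q_i)\ge2(i+1)$, and shows that the resulting closed form $D_r$ decreases from $D_6=\frac{67}{315}$.

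If you keep your own tail argument, note one limitation. With $c_i\ge3i$ and last term at most $\frac1{3r}$, your logarithmic estimate only closes for $r\ge10$. So $6\le r\le9$ need direct checks, which do pass with room to spare.

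Your case split on $d_r$ is a genuinely nice touch for $r=2$. It replaces the paper's ad hoc computation of $\operatorname{ord}_7(5)=6$ and $\operatorname{ord}_5(7)=4$ for $n=5^a7^b$. If you want the split to work for all $r$, use its full strength:
\begin{itemize}
\item $d_r=1$ forces $p_r-1\ge2m_r\ge2p_1\cdots p_{r-1}$;
\item $d_r\ge2$ bounds the last term by $\frac1{2c_r}$.
\end{itemize}
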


\begin{proof}
The case $r=1$ has already been disposed of.

Suppose first that $r=2$. If
\[
(p_1,p_2)=(5,7),
\]
then
\[
\operatorname{ord}_{7}(5)=6,
\qquad
\operatorname{ord}_{5}(7)=4.
\]
Passing to prime powers cannot decrease multiplicative order, and
hence
\[
\sum_{i=1}^{2}\frac{1}{\varphi(q_i)d_i}
\le
\frac1{4\cdot6}
+
\frac1{6\cdot4}
=
\frac1{12}
<
\frac{157}{720}.
\]
For every other pair, $p_2\ge11$. By
Lemma~\ref{lem:order-lower-bound}, $d_1\ge3$, while $d_2\ge1$.
Thus
\[
\sum_{i=1}^{2}\frac{1}{\varphi(q_i)d_i}
\le
\frac1{3(p_1-1)}
+
\frac1{p_2-1}
\le
\frac1{12}+\frac1{10}
=
\frac{11}{60}
<
\frac{157}{720}.
\]

For $r=3$, Lemma~\ref{lem:order-lower-bound} gives
$d_1\ge4$ and $d_2\ge3$. Hence
\[
\sum_{i=1}^{3}\frac{1}{\varphi(q_i)d_i}
\le
\frac1{4\cdot4}
+
\frac1{6\cdot3}
+
\frac1{10}
=
\frac{157}{720}.
\]

For $r=4$,
\[
\sum_{i=1}^{4}\frac{1}{\varphi(q_i)d_i}
\le
\frac1{4\cdot5}
+
\frac1{6\cdot4}
+
\frac1{10\cdot3}
+
\frac1{12}
=
\frac5{24}
<
\frac{157}{720}.
\]

For $r=5$,
\[
\sum_{i=1}^{5}\frac{1}{\varphi(q_i)d_i}
\le
\frac1{4\cdot6}
+
\frac1{6\cdot5}
+
\frac1{10\cdot4}
+
\frac1{12\cdot3}
+
\frac1{16}
=
\frac{137}{720}
<
\frac{157}{720}.
\]

It remains to consider $r\ge6$. Since the $p_i$ are distinct odd primes with $p_1\ge5$, we obtain
\[
p_i\ge2i+3,
\qquad
\varphi(q_i)\ge p_i-1\ge2(i+1).
\]
Hence Lemma~\ref{lem:order-lower-bound}, together with $d_r\ge1$, yields
\begin{equation}\label{eq:Dr-def}
\sum_{i=1}^{r}\frac{1}{\varphi(q_i)d_i}
\le
\sum_{i=1}^{r-1}
\frac{1}{2(i+1)(r-i+2)}
+
\frac{1}{2(r+1)}
=:D_r.
\end{equation}
A partial-fraction summation gives
\begin{equation}\label{eq:Dr-formula}
D_r
=
\frac{H_r+H_{r+1}-\frac52}{2(r+3)}
+
\frac{1}{2(r+1)},
\end{equation}
where
\[
H_r:=1+\frac12+\cdots+\frac1r.
\]
Moreover,
\begin{equation}\label{eq:Dr-difference}
D_{r+1}-D_r
=
-
\frac{
4(r+1)(r+2)H_r-r(7r+17)
}{
4(r+1)(r+2)(r+3)(r+4)
}.
\end{equation}
For $r\ge3$,
\[
H_r\ge H_3=\frac{11}{6},
\]
and hence
\[
4(r+1)(r+2)H_r-r(7r+17)
\ge
\frac{r^2+15r+44}{3}
>0.
\]
Thus $D_r$ is strictly decreasing for $r\ge3$. In particular, for
$r\ge6$,
\[
D_r\le D_6=\frac{67}{315}<\frac{157}{720}.
\]

Consequently, in every case,
\[
\sum_{i=1}^{r}
\frac{1}{\varphi(q_i)d_i}
\le
\frac{157}{720}.
\]
Substitution into \eqref{eq:Zn-union-bound} gives
\[
|Z_n|
\le
\frac{157}{1440}\varphi(n)
<
\frac{\varphi(n)}9.
\]
\end{proof}


\section{Proof of the main theorem}

There is no length-four minimal zero-sum sequence over $C_1$.
Hence any counterexample has $n>1$.

For a function $h:G\to\mathbb C$ on a finite abelian group $G$,
write
\[
\operatorname{supp}h
:=
\{x\in G:h(x)\ne0\}
\]
for its support. For $\gamma\in\widehat G$, define the unnormalized
Fourier transform by
\[
\widehat h(\gamma)
:=
\sum_{x\in G}h(x)\overline{\gamma(x)}.
\]

We shall use the classical uncertainty principle for Fourier analysis
on finite abelian groups; see
\cite[Theorem~4.10]{Na20}. The uncertainty principle states that if $h\ne0$, then
\begin{equation}\label{eq:uncertainty}
|\operatorname{supp}h|\,
|\operatorname{supp}\widehat h|
\ge |G|.
\end{equation}

The inequality is independent of the normalization convention used
for the Fourier transform.

\begin{proof}[Proof of Theorem~\ref{thm:full}]
Suppose, to the contrary, that the theorem is false. Choose $n$
minimal, subject to $(n,6)=1$, for which there exists a length-four
minimal zero-sum sequence
\[
S=(a_1)(a_2)(a_3)(a_4)
\]
over $C_n$ with
\[
\operatorname{ind}(S)\ne1.
\]
Represent the four nonzero terms by integers
\[
1\le a_j\le n-1.
\]

We first show that
\begin{equation}\label{eq:primitive-gcd}
\gcd(n,a_1,a_2,a_3,a_4)=1.
\end{equation}
Suppose otherwise, and put
\[
d:=\gcd(n,a_1,a_2,a_3,a_4)>1,
\qquad
m:=\frac nd,
\qquad
b_j:=\frac{a_j}{d}.
\]
Since $1\le a_j\le n-1$ and $a_j=db_j$, we have
\[
1\le b_j\le m-1.
\]
Moreover, since
\[
\sum_{j=1}^{4}a_j\equiv0\pmod n,
\]
division by $d$ gives
\[
\sum_{j=1}^{4}b_j\equiv0\pmod m.
\]
Hence
\[
S'=(b_1)(b_2)(b_3)(b_4)
\]
is a zero-sum sequence over $C_m$. It is minimal, since any proper
zero-sum subsequence of $S'$ would, after multiplication by $d$,
give a proper zero-sum subsequence of $S$.

We claim that
\[
\operatorname{ind}(S')\ne1.
\]
Suppose instead that $\operatorname{ind}(S')=1$. Then there exists
$h\in\Um$ such that
\[
\sum_{j=1}^{4}(hb_j)_m=m.
\]

We next lift $h$ to a unit modulo $n$. Let $\mathcal Q$ be the set of
prime divisors of $n$ that do not divide $m$. By the Chinese remainder
theorem, there exists an integer $u$ satisfying
\[
u\equiv h\pmod m,
\qquad
u\equiv1\pmod q
\quad(q\in\mathcal Q).
\]
Every prime divisor of $n$ either divides $m$ or belongs to
$\mathcal Q$, and hence $(u,n)=1$. Thus $u\in U_n$.

Since $n=dm$, $a_j=db_j$, and $u\equiv h\pmod m$,
\[
(ua_j)_n
=
d(hb_j)_m.
\]
Therefore
\[
\sum_{j=1}^{4}(ua_j)_n
=
d\sum_{j=1}^{4}(hb_j)_m
=
dm=n,
\]
which gives
\[
\operatorname{ind}(S)=1,
\]
a contradiction.

Hence $S'$ is again a counterexample. But
\[
m<n
\qquad\text{and}\qquad
(m,6)=1,
\]
contradicting the minimality of $n$. This proves
\eqref{eq:primitive-gcd}.

We now distinguish two cases.

\medskip
\noindent
\emph{Case 1. At least one $a_j$ is a nonunit modulo $n$.}

By \eqref{eq:primitive-gcd},
\[
\gcd(n,a_1,a_2,a_3,a_4)=1.
\]
Hence the theorem of Shen--Xia--Li
\cite[Theorem~1.3]{SXL14} gives
\[
\operatorname{ind}(S)=1,
\]
contrary to the choice of $S$.

\medskip
\noindent
\emph{Case 2. All $a_1,a_2,a_3,a_4$ are units modulo $n$.}

Thus
\[
a_1,a_2,a_3,a_4\in U_n.
\]
Since $S$ is a counterexample, its index is not $1$. On the other
hand, for every $g\in U_n$,
\[
\frac1n\sum_{j=1}^{4}(ga_j)_n
+
\frac1n\sum_{j=1}^{4}(-ga_j)_n
=
4.
\]
Hence the index of a length-four zero-sum sequence with nonzero terms
is at most $2$. Therefore
\[
\operatorname{ind}(S)=2.
\]

Let
\[
\mu:U_n\longrightarrow\mathbb Z_{\ge0}
\]
be the multiplicity function of the four-term multiset
$\{a_1,a_2,a_3,a_4\}$:
\[
\mu(x)
:=
\bigl|\{j:a_j=x\}\bigr|.
\]
Define its odd part by
\[
\nu(x):=\mu(x)-\mu(-x).
\]
Then
\[
\operatorname{supp}\nu
\subseteq
\{\pm a_1,\pm a_2,\pm a_3,\pm a_4\},
\]
so
\begin{equation}\label{eq:nu-support}
|\operatorname{supp}\nu|\le8.
\end{equation}

For $\chi\in\widehat{U_n}$,
\begin{align}
\widehat\nu(\chi)
&=
\sum_{x\in U_n}
\bigl(\mu(x)-\mu(-x)\bigr)\overline{\chi(x)}
\notag\\
&=
\bigl(1-\chi(-1)\bigr)
\sum_{j=1}^{4}\overline{\chi(a_j)}.
\label{eq:nu-fourier}
\end{align}

If $\chi$ is even, then
\[
\widehat\nu(\chi)=0
\]
by \eqref{eq:nu-fourier}.

Now let $\chi$ be odd and suppose that $\chi\notin Z_n$. Then
\[
M_n(\chi)\ne0.
\]
Since every odd character is nonprincipal,
Lemma~\ref{lem:moment-identity} gives
\[
\sum_{j=1}^{4}\overline{\chi(a_j)}=0.
\]
Hence \eqref{eq:nu-fourier} again yields
\[
\widehat\nu(\chi)=0.
\]
Therefore
\begin{equation}\label{eq:nu-fourier-support}
\operatorname{supp}\widehat\nu
\subseteq Z_n.
\end{equation}

By Lemma~\ref{lem:exceptional-spectrum},
\[
|\operatorname{supp}\widehat\nu|
\le
|Z_n|
<
\frac{\varphi(n)}{9}.
\]
If $\nu\ne0$, then the uncertainty principle
\eqref{eq:uncertainty}, together with
\eqref{eq:nu-support}, gives
\[
\varphi(n)
=
|U_n|
\le
|\operatorname{supp}\nu|\,
|\operatorname{supp}\widehat\nu|
<
8\cdot\frac{\varphi(n)}{9}
<
\varphi(n),
\]
a contradiction. Hence
\[
\nu=0.
\]
Therefore
\[
\mu(x)=\mu(-x)
\qquad (x\in U_n).
\]

Since $(n,6)=1$, the integer $n$ is odd. Hence the involution
\[
x\longmapsto -x
\]
has no fixed point on $U_n$. The equality
\[
\mu(x)=\mu(-x)
\]
therefore implies that the four-term multiset
$\{a_1,a_2,a_3,a_4\}$ is a union, with multiplicity, of opposite
pairs. In particular, $S$ contains a two-term zero-sum subsequence
\[
(x)(-x),
\]
contradicting the minimality of $S$.

Both cases lead to contradictions. Hence no counterexample exists,
and every length-four minimal zero-sum sequence over $C_n$ with
$(n,6)=1$ has index one.
\end{proof}

\begin{remark}
The proof separates the length-four index conjecture into two complementary cases. After the standard gcd reduction, the case in which at least one term is a nonunit modulo $n$ follows from the theorem of Shen--Xia--Li~\cite[Theorem~1.3]{SXL14}. The contribution of the present paper is the complete resolution of the remaining unit case by multiplicative Fourier analysis.
\end{remark}



\begin{thebibliography}{99}
\setlength{\itemsep}{0pt}

\bibitem{Apostol1976}
T.~M. Apostol,
\newblock \emph{Introduction to Analytic Number Theory},
\newblock Undergraduate Texts in Mathematics, Springer-Verlag, New York--Heidelberg, 1976.

\bibitem{CFS99}
S.~T. Chapman, M.~Freeze and W.~W. Smith,
\newblock Minimal zero-sequences and the strong Davenport constant,
\newblock \emph{Discrete Math.} \textbf{203} (1999), no.~1--3, 271--277.

\bibitem{Gao00}
W.~D. Gao,
\newblock Zero sums in finite cyclic groups,
\newblock \emph{Integers} \textbf{0} (2000), A14.

\bibitem{GaGe09}
W.~Gao and A.~Geroldinger,
\newblock On products of $k$ atoms,
\newblock \emph{Monatsh. Math.} \textbf{156} (2009), no.~2, 141--157.

\bibitem{Ge16}
F.~Ge,
\newblock Note on the index conjecture in zero-sum theory and its connection to a Dedekind-type sum,
\newblock \emph{J. Number Theory} \textbf{168} (2016), 128--134.

\bibitem{Ge18}
F.~Ge,
\newblock On the index conjecture in zero-sum theory: singular case,
\newblock \emph{Int. J. Number Theory} \textbf{14} (2018), no.~2, 355--364.

\bibitem{Ge21}
F.~Ge,
\newblock Solution to the index conjecture in zero-sum theory,
\newblock \emph{J. Combin. Theory Ser. A} \textbf{180} (2021), 105410.

\bibitem{Gero87}
A.~Geroldinger,
\newblock On non-unique factorization into irreducible elements. II,
\newblock in: \emph{Number Theory, Vol. II: Diophantine and Algebraic}, Colloq. Math. Soc. J\'anos Bolyai, vol.~51, 1990, 723--757.

\bibitem{Gero09}
A.~Geroldinger,
\newblock Additive group theory and non-unique factorizations,
\newblock in: \emph{Combinatorial Number Theory and Additive Group Theory}, Birkh\"auser, Basel, 2009, 1--86.

\bibitem{Gry12}
D.~J. Grynkiewicz,
\newblock \emph{Structural Additive Theory},
\newblock Developments in Mathematics, vol.~30, Springer, Cham, 2013.

\bibitem{GV20}
D.~J. Grynkiewicz and U.~Vishne,
\newblock The index of small length sequences,
\newblock \emph{Int. J. Algebra Comput.} \textbf{30} (2020), no.~5, 977--1014.

\bibitem{JRW13}
S.~Jabuka, S.~Robins and X.~Wang,
\newblock Heegaard Floer correction terms and Dedekind--Rademacher sums,
\newblock \emph{Int. Math. Res. Not. IMRN} \textbf{2013} (2013), no.~1, 170--183.

\bibitem{LK89}
P.~Lemke and D.~Kleitman,
\newblock An addition theorem on the integers modulo $n$,
\newblock \emph{J. Number Theory} \textbf{31} (1989), no.~3, 335--345.

\bibitem{LP13}
Y.~Li and J.~Peng,
\newblock Minimal zero-sum sequences of length four over finite cyclic groups. II,
\newblock \emph{Int. J. Number Theory} \textbf{9} (2013), no.~4, 845--866.

\bibitem{LPYZ10}
Y.~Li, C.~Plyley, P.~Yuan and X.~Zeng,
\newblock Minimal zero sum sequences of length four over finite cyclic groups,
\newblock \emph{J. Number Theory} \textbf{130} (2010), no.~9, 2033--2048.

\bibitem{Na20}
M.~B. Nathanson,
\newblock \emph{Elementary Methods in Number Theory},
\newblock Graduate Texts in Mathematics, vol.~195, Springer, New York, 2000.

\bibitem{Po04}
V.~Ponomarenko,
\newblock Minimal zero sequences of finite cyclic groups,
\newblock \emph{Integers} \textbf{4} (2004), A24, 6 pp.

\bibitem{Pe25}
A.~Pendleton,
\newblock Improved bounds for the index conjecture in zero-sum theory,
\newblock \emph{J. Number Theory} \textbf{268} (2025), 124--141.

\bibitem{SC07}
S.~Savchev and F.~Chen,
\newblock Long zero-free sequences in finite cyclic groups,
\newblock \emph{Discrete Math.} \textbf{307} (2007), no.~22, 2671--2679.

\bibitem{SX14}
C.~Shen and L.~Xia,
\newblock On the index-conjecture of length four minimal zero-sum sequences. II,
\newblock \emph{Int. J. Number Theory} \textbf{10} (2014), no.~3, 601--622.

\bibitem{SXL14}
C.~Shen, L.~Xia and Y.~Li,
\newblock On the index of length four minimal zero-sum sequences,
\newblock \emph{Colloq. Math.} \textbf{135} (2014), no.~2, 201--209.

\bibitem{Xia13}
L.~Xia,
\newblock On the index-conjecture on length four minimal zero-sum sequences,
\newblock \emph{Int. J. Number Theory} \textbf{9} (2013), no.~6, 1505--1528.

\bibitem{XS13}
L.-M. Xia and C.~Shen,
\newblock Minimal zero-sum sequences of length four over cyclic group with order $n=p^\alpha q^\beta$,
\newblock \emph{J. Number Theory} \textbf{133} (2013), no.~12, 4047--4068.

\bibitem{Yuan07}
P.~Yuan,
\newblock On the index of minimal zero-sum sequences over finite cyclic groups,
\newblock \emph{J. Combin. Theory Ser. A} \textbf{114} (2007), no.~8, 1545--1551.

\bibitem{ZQ17}
X.~Zeng and X.~Qi,
\newblock On minimal zero-sum sequences of length four over cyclic groups,
\newblock \emph{Colloq. Math.} \textbf{146} (2017), no.~2, 157--163.

\end{thebibliography}
\end{document}